\newtheorem{theorem}{Theorem}[section]
\newtheorem{lemma}[theorem]{Lemma}
\newtheorem{corollary}[theorem]{Corollary}
\newtheorem{proposition}[theorem]{Proposition}
\newtheorem{remark}[theorem]{Remark}
\theoremstyle{definition}
\newtheorem{remark/example}[theorem]{Remark/Example}
\let\oldlabel=\label
\def\prellabel{\marginparsep=1em\marginparwidth=44pt
 \def\label##1{\oldlabel{##1}\ifmmode\else\ifinner\else
 \marginpar{{\footnotesize\ \\ \tt
 ##1}}\fi\fi}}
\numberwithin{equation}{section}
\def\PP{ {\bf P} }
\def\NN{ {\bf N} }
\def\ZZ{ {\bf Z} }
\def\FF{ {\bf F} }
\def\GG{ {\bf G} }
\newcommand{\Rees}{\operatorname{Rees}}
\newcommand{\GL}{\operatorname{GL}}
\newcommand{\PGL}{\operatorname{PGL}}
\newcommand{\Tor}{\operatorname{Tor}}
\newcommand{\reg}{\operatorname{reg}}
\newcommand{\Aldo}[1]{} 
\newcommand{\Giulio}[1]{}
\numberwithin{equation}{section}
\begin{document}
\title{Koszul property of projections of  the Veronese cubic surface}
\author{Giulio Caviglia}
\address{ Department of Mathematics, Purdue University, 150 N. University Street, West Lafayette, IN 47907-2067, USA
}
\email{gcavigli@math.purdue.edu }
\thanks{The work of the first author was supported by a grant from the Simons Foundation (209661 to G. C.)}

\author{Aldo Conca}
\address{Dipartimento di Matematica, Universit\'{a} di Genova, Via Dodecaneso 35, 16146 Genova,
Italy} 
\email{conca@dima.unige.it}

\dedicatory{To Tito Valla, our teacher and friend}

 \date{\today}
 \keywords{Koszul algebras, projections of the Veronese surface,  diagonal  
algebras, complete intersections} 
\subjclass[2010]{ 13D02,	14M99}

\begin{abstract}
Let $V\subset {\bf P}^9$ be the Veronese cubic surface. We classify the projections  of $V$  to ${\bf P}^8$ whose coordinate rings are Koszul. In particular we obtain a purely theoretical proof of the Koszulness
of the pinched Veronese, a result obtained originally by Caviglia
using  filtrations, deformations and computer assisted computations.
To this purpose we extend, to certain complete intersections,
results of Conca, Herzog, Trung and Valla concerning homological
properties of diagonal algebras.

\end{abstract}

\maketitle  
  
 \section{Introduction} 
 Koszul algebras were originally introduced by Priddy \cite{P} in his study of  homological properties of graded (non-commutative) algebras arising from various constructions in algebraic topology.  Given a field $K$,  a positively graded $K$-algebra $A=\oplus_{i\in \NN} A_i$ with $A_0=K$ is Koszul if the field $K$,  viewed as a $A$-module via the identification $K=A/A_+$, has a linear free resolution. In the very interesting volume \cite{PP}  Polishchuk and Positselski  discuss various surprising aspects of Koszulness. 
 
In the commutative setting Koszul algebras can be characterized by means of  the relative Castelnuovo-Mumford regularity. Paraphrasing Hochster \cite[p. 887]{H}, one can say that life is worth living in standard graded algebras when all the finitely generated graded modules have finite (relative) Castelnuovo-Mumford regularity. Such algebras are exactly the commutative Koszul algebras, see Avramov and Eisenbud \cite{AE} and Avramov and Peeva \cite{AP}. 

Let $K$ be a field and $R=K[x_0,x_1,x_2]$.  The pinched Veronese is the $K$-subalgebra of $R$ generated by all the monomials of degree $3$ with the exception of $x_0x_1x_2$. It can be seen as the coordinate ring of a projection  from a point  of the Veronese cubic surface $V_{2,3}$ of $\PP^9$, that is,   the embedding  of  $\PP^2$ in $\PP^9$ with the forms of degree $3$.  It is a ``generic" projection with respect to the stratification of the ambient space by the secant varieties of $V_{2,3}$. That is,  the center of the projection is outside  the second secant variety $\sec_2(V_{2,3})$ of $V_{2,3}$ and the third secant is the ambient space $\PP^9$.  

 In the nineties Sturmfels asked,  in a conversation with Peeva,  whether the pinched Veronese is Koszul and the problem became quickly known as a benchmark example to test new theorems and techniques.   The first  author of the present  paper proved in \cite{C} that the pinched Veronese is indeed Koszul by using a combination of arguments based on filtrations, deformations and computer assisted computations.  

More generally, one can ask the same question for any projection of $V_{2,3}$ to $\PP^8$. In particular,  one can ask whether  the projection of  $V_{2,3}$ to $\PP^8$ from a point that  does not belong  to  $\sec_2(V_{2,3})$ is Koszul. 
The goal of the paper is to show that this is indeed the case. As a special case, we obtain a entirely theoretical proof of the Koszulness of the pinched Veronese.

To achieve this result we develop in Section \ref{se1}  homological arguments that generalize results of Conca, Herzog, Trung and Valla \cite{CHTV}.   
Given a standard $\ZZ^2$-graded $K$-algebra $S$ and  a cyclic  subgroup $\Delta$ of $\ZZ^2$   one considers the ``diagonal" subalgebra $S_\Delta$  of $S$ defined as $\oplus_{v\in \Delta} S_v$.  Similarly, for every $\ZZ^2$-graded $S$-module $M$ one defines the $S_\Delta$-module $M_\Delta$ as $\oplus_{v\in \Delta} M_v$.  For every element $w\in \ZZ^2$, the shifted copy $S(w)$ of $S$ is defined as the $\ZZ^2$-graded $S$ module whose $v$-th component is $S_{v+w}$.   
 
In the transfer of homological information  from $S$ to $S_\Delta$ it is crucial to bound the homological invariants of the  shifted-diagonal modules  $S(w)_\Delta$ as $S_\Delta$-modules.  
When $S$ is is a bigraded polynomial ring, it is proved 
in \cite{CHTV} that the modules  $S(w)_\Delta$ have  a linear $S_\Delta$-free resolution.  
We extend this result to the main diagonal $\Delta=(1,1)\ZZ$ of
certain bigraded complete intersections, see Section  \ref{se1}.

 In Section \ref{se2} we  prove that, given a  complete intersection $I$ of $3$ quadrics in a  polynomial ring $R$,  the $K$-subalgebra of $R$ generated by the cubics in $I$ is Koszul. This is done by constructing a complex whose homology vanishes along the relevant diagonal. 
 Finally in Section \ref{se3} we reinterpret the result of Section \ref{se2} to get the classification of the projections to $\PP^8$ 
 of the cubic Veronese surface  with a Koszul coordinate ring.  
 
  We thank Alexandru Constantinescu, Giorgio Ottaviani, Euisung Park, Claudiu Raicu,  Takafumi Shibuta and Peter Schenzel  for several useful discussions concerning the various aspects of the paper. 
 
 \section{Generalities and preliminary results} 
 \label{se1}
Let $K$ be a field and $A$ be a standard graded $K$-algebra, that is, a commutative algebra of the form $A=\oplus_{i\in \NN} A_i$ such that $A_0=K$, $\dim_K A_1$ is finite and $A$ is generated as a $K$-algebra by $A_1$. In other words, a standard graded $K$-algebra can be written as $A=S/I$ where $S=K[x_1,\dots,x_n]$ is a polynomial ring over $K$  equipped with the graded structure induced by the assignment $\deg(x_i)=1$ for every $i$ and $I$ is a homogeneous ideal.  The algebra $A$ is said to be quadratic if its defining ideal $I$ is generated by quadrics and $G$-quadratic if $I$ has a Gr\"obner basis of quadrics (with respect to some coordinate system of $S$ and some term order).  
  
  Given a non-zero graded $A$-module $M$ one  defines the (relative) Castelnuovo-Mumford regularity as
$$\reg_A(M)= \sup\{ j-i : \Tor_i^A(M,K)_j\neq 0\}\in \ZZ\cup \{+\infty\}.$$  
One says that $M$ has a linear $A$-resolution if $M$ is generated in a single degree, say $d$,  and $\reg_A(M)=d$.  Also, $A$ is Koszul if the residue field  $K$ has a linear $A$-resolution, i.e. $\reg_A K=0$. 
It turns out that  a standard graded $K$-algebra $A$ is Koszul if and only if  $\reg_A(M)$ is finite for every finitely generated graded $A$-module $M$, see \cite[Theorem 1]{AE} and \cite[Theorem 2]{AP}. 
 
It is known that Koszul algebras are quadratic and that G-quadratic algebras are Koszul.  Both implications are, in general, strict, see Eisenbud, Reeves and Totaro \cite{ERT}. 
 
One defines  the graded Poincar\'{e} series of $M$ as 
$$P_M^A(t,s)=\sum_{i, j} \dim_K \Tor_i^A(M,K)_j s^jt^i\in \ZZ[s][|t|].$$

\begin{lemma}\label{Cartan-Eilenberg}
Let $A$ be a standard graded $K$-algebra and let $B$ be a graded  quotient of $A$. Let $M$ be a finitely generated graded $B$-module. 
Assume $\reg_A(B)\leq 1$. Then: 
\begin{itemize} 
\item[(1)] $\reg_B(M) \leq \reg_A(M)$. 
\item[(2)]  If $M$ has a $A$-linear resolution then $M$ has a $B$-linear resolution. 
\item[(3)]  If  $A$ is Koszul then $B$ is Koszul as well. 
\end{itemize}
\end{lemma}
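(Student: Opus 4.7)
The plan is to establish part (1) via the Cartan--Eilenberg change-of-rings spectral sequence, from which (2) and (3) follow with little extra work. For the $B$-module $M$ one has the first-quadrant homological spectral sequence
\[
E_2^{p,q} = \Tor_p^B(M, \Tor_q^A(B, K)) \Rightarrow \Tor_{p+q}^A(M, K),
\]
with differentials $d_r\colon E_r^{p,q} \to E_r^{p-r, q+r-1}$. Along the bottom row one has $E_2^{p,0} = \Tor_p^B(M, K)$ (because $\Tor_0^A(B, K) = K$), while the hypothesis $\reg_A(B) \leq 1$ forces $\Tor_q^A(B, K)_a = 0$ for $a > q+1$, so each $E_2^{p,q}$ is a direct sum of graded shifts of $\Tor_p^B(M, K)$ by amounts at most $q+1$.

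For (1), set $r_M = \reg_A(M)$ and assume for contradiction that there exists a pair $(p, j)$ with $j > p + r_M$ and $\Tor_p^B(M, K)_j \neq 0$; choose one with $p$ minimal, call it $p_0$. I will show that the nonzero class $\alpha \in E_2^{p_0, 0}_j$ survives to $E_\infty$. Incoming differentials at $(p_0, 0)$ originate at $(p_0 + r, 1-r)$, which lies outside the first quadrant for $r \geq 2$. For an outgoing differential $d_r\colon E_r^{p_0, 0} \to E_r^{p_0 - r, r-1}$, a typical summand of the $E_2$-term at the target is $\Tor_{p_0 - r}^B(M, K)_{j - a}$ with $a \leq r$ thanks to the bound on $\reg_A(B)$; but $j > p_0 + r_M$ and $a \leq r$ yield $j - a > (p_0 - r) + r_M$, so the minimality of $p_0$ forces this summand to vanish. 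Hence all $d_r$ out of $\alpha$ are zero, so $\alpha$ persists to $E_\infty^{p_0, 0}$ in internal degree $j$, contributing a nonzero class to $\Tor_{p_0}^A(M, K)_j$; but this last group vanishes for $j > p_0 + r_M$, a contradiction.

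Parts (2) and (3) follow quickly. For (2), if $M$ is generated in a single degree $d$ with $\reg_A(M) = d$, then (1) gives $\reg_B(M) \leq d$, while $\Tor_0^B(M, K)_d \neq 0$ supplies the reverse inequality; hence $M$ has a linear $B$-resolution. For (3), apply (1) to the $B$-module $K$ to obtain $\reg_B(K) \leq \reg_A(K) = 0$, so $B$ is Koszul. The main obstacle I anticipate is setting up the change-of-rings spectral sequence carefully in the graded setting and verifying its convergence and differential structure; once that is in place, the degree bookkeeping above is routine, and the hypothesis $\reg_A(B) \leq 1$ enters precisely to produce the bound $a \leq r$ that closes the key computation.
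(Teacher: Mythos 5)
Your argument is correct and rests on the same engine as the paper's proof: the Cartan--Eilenberg change-of-rings spectral sequence, which is exactly what underlies the coefficientwise Poincar\'e-series inequality $P_M^B(t,s)\leq P_M^A(t,s)/(1+t-tP_B^A(t,s))$ that the paper cites from Avramov and then unpacks by power-series degree bookkeeping. You simply unwind the spectral sequence directly via a minimal-counterexample degree chase instead of passing through the series inequality; the accounting is equivalent, and parts (2) and (3) are deduced from (1) in the same way in both treatments.
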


\begin{proof} Notice that (3) is a special case of $(2)$ and  (2) is a special case of (1). To prove (1) we may assume $\reg_A(M)$ is finite (otherwise there is nothing to prove), say $a=\reg_A (M)$.  
The Cartan-Eilenberg spectral sequence,  in the graded setting, induces a coefficientwise  inequality
$$P_M^B(t,s)\leq  \frac{P_M^A(t,s)}{1+t-tP_B^A(t,s)},$$
see Avramov \cite[Proposition 3.3.2]{A}. Set  $G(t,s)=tP_B^A(t,s)-t$ so that,  by assumption, we may write $G(t,s)=\sum_{i\geq 2} g_i(s)t^i$ with $g_i(s)$ polynomials of degree $\leq i$. 
It follows that the terms $s^jt^i$ that appear with a  non-zero coefficient in $1/(1-G)=\sum_{k\geq 0} G^k$  satisfy $j\leq i$. 
But then the terms $s^jt^i$ that appear with a  non-zero coefficient in $P_M^A(t,s)/(1-G)$ satisfy $j\leq i+a$ and the desired inequality follows. 
\end{proof} 

Let $R=\oplus_{(i,j)\in \ZZ^2}   R_{(i,j)} $ be a bigraded  standard $K$-algebra. Here standard means that $R_{(0,0)}=K$ and that $R$ is generated as a $K$-algebra by the $K$-vector spaces $R_{(1,0)}$ and $R_{(0,1)}$ of finite dimension.   Let  $\Delta$ be the diagonal $(1,1)\ZZ\subset \ZZ^2$. We set $R_\Delta=\oplus_{i\in \ZZ}  R_{(i,i)}$ and observe that $R_\Delta$ is the $K$-subalgebra of $R$ generated by $R_{(1,1)}$ and hence it is a standard graded $K$-algebra. For every bigraded $R$-module $M=\oplus_{(i,j)\in \ZZ^2} M_{(i,j)}$ we set  $M_\Delta=\oplus_{i\in \ZZ} M_{(i,i)}.$ Notice that $M_{\Delta}$ is a module over  $R_\Delta$.  We may think of $-_\Delta$ as a functor from the category of bigraded $R$-modules and maps of degree $0$ to that of graded $R_\Delta$-modules with maps of degree $0$. Clearly $-_\Delta$, being a selection of homogeneous components,  is an exact functor.  

 For $(a,b)\in \ZZ^2$ let $R(-a,-b)$ be the shifted copy of $R$. The diagonal module $R(-a,-b)_\Delta$ is a $R_\Delta$-submodule of $R$. The homological  properties of  $R(-a,-b)_\Delta$  play an important role in the transfer of homological information  from $R$ to $R_\Delta$. Note that $R(-a,-b)_\Delta$ is generated by $R_{(0,a-b)}$ if $a\geq b$ and by $R_{(b-a,0)}$ if $b\geq a$. More precisely, one has:  
 $$R(-a,-b)_\Delta=\left\{ 
 \begin{array}{llll}
 R(0,a-b)_\Delta(-a) & \mbox{ if } a\geq b\\
 R(b-a,0)_\Delta(-b) & \mbox{ if } b\geq a.
 \end{array}
 \right.
 $$
 We have the following: 
\begin{proposition} \label{messi}
Let $S=K[x_1,\dots,x_m, t_1,\dots, t_n]$ be the   polynomial ring bigraded by $\deg x_i=(1,0)$ for $i=1,\dots,m$ and $\deg t_i=(0,1)$  for $i=1,\dots,n$.  Let $I$ be an ideal of $S$ generated by a regular sequence  of elements of $S$ all of bidegree $(2,1)$ and $R=S/I$. Then: 
\begin{itemize} 
\item[(1)] $R_\Delta$ is Koszul. 
\item[(2)] For every $(a,b)\in \ZZ^2$ the module $R(-a,-b)_\Delta$ has a linear $R_\Delta$-resolution, i.e., $\reg_{R_\Delta} R(-a,-b)_\Delta=\max(a,b)$.  
\end{itemize} 
 \end{proposition} 

 \begin{proof} Let $h$ be the codimension of $I$. We argue by induction on $h$. If  $h=0$ then $R_\Delta$  is the Segre product of $K[x_1,\dots,x_m]$ and $K[t_1,\dots, t_n]$ that  is Koszul, indeed G-quadratic, see for instance \cite{ERT}. In this case, statement (2) is proved in \cite[proof of Thm 6.2]{CHTV}. 
 Assume $h>0$. We may write $R=T/(f)$ where $f$ is a $T$-regular element of bidegree $(2,1)$ and where $T$ is defined by a $S$-regular sequence of length  $h-1$ of elements of bidegree $(2,1)$. We have a short exact sequence of $T$-modules: 
\begin{equation}\label{eq1} 
0\to T(-2,-1)\to T \to R\to 0
\end{equation}
and applying $\Delta$ we have an exact sequence of $T_\Delta$-modules: 
 $$0\to T(-2,-1)_\Delta \to T_\Delta  \to R_\Delta \to 0.$$ 
 By induction we know $T_\Delta$ is Koszul and that $\reg_{T_\Delta} T(-2,-1)_\Delta=2$.  Hence 
 $$\reg_{T_\Delta} R_\Delta \leq 1.\label{eq2}$$ 
 By Lemma \ref{Cartan-Eilenberg}(3) we may conclude that $R_\Delta$ is Koszul.  
 In order to prove (2) we divide the discussion in three cases.
\medskip 

\noindent  {\bf Case 1.} $a=b$. Just observe that $R(-a,-a)_\Delta=R_\Delta(-a)$. 
 
  \medskip 

\noindent  {\bf Case 2.} $b>a$. We first shift  (\ref{eq1}) by $(-a,-b)$ and then apply $\Delta$.  We get a short exact sequence of  $T_\Delta$-modules: 
 $$0\to T(-a-2,-b-1)_\Delta \to T(-a,-b)_\Delta  \to R(-a,-b)_\Delta \to 0.$$ 
 So we have: 
 $$\reg_{T_\Delta} R(-a,-b)_\Delta \leq \max\{\reg_{T_\Delta}  T(-a,-b)_\Delta , \reg_{T_\Delta}  T(-a-2,-b-1)_\Delta-1\}.$$ 
 By induction and since we are assuming $b>a$ we deduce that 
 $$\reg_{T_\Delta} R(-a,-b)_\Delta =b.$$ 
 Since we have shown already that $\reg_{T_\Delta} R_\Delta \leq 1$,  by Lemma \ref{Cartan-Eilenberg}(1) we can deduce that $\reg_{R_\Delta} R(-a,-b)_\Delta =b$.
  \medskip

\noindent {\bf Case 3.} $a>b$.
Set $P=(t_1,\dots,t_n)\subset S$.  As already observed we have 
$R(-a,-b)_\Delta= R(0,a-b)_\Delta(-a)$. 
So we have to prove that $\reg_{R_\Delta}R(0,u)_\Delta=0$ for every $u>0$. 
We consider the minimal free (bigraded) resolution of $S/P^u$ as an $S$-module (that is a Eagon-Northcott complex) 
$$\FF: 0\to F_n\to F_{n-1} \to \dots \to F_1\to F_0\to 0$$
with $F_0=S$ and $F_i=S^\#(0,-u-i+1)$ for $i>0$ where $\#$ denotes some integer depending on $n,u$ and $i$ that  is irrelevant in our discussion. 
The homology of $\FF\otimes R$ is $\Tor^S(S/P^u,R)$. We may  as well compute $\Tor^S(S/P^u,R)$  as the homology of  $S/P^u\otimes \GG$ where $\GG$ is a free resolution of $R$ as an $S$-module. By assumption, we may take  $\GG$  to be  the Koszul complex on a sequence of  elements of bidegree $(2,1)$. 
It follows that: 

\[H_i(\FF\otimes R)=\left\{
\begin{array}{lll}
 \mbox{a subquotient of } (S/P^u)^{\#} (-2i,-i)  & \mbox{ if } & 0\leq i\leq h\\
0 &  \mbox{ if  } & i>h.
\end{array}
\right.
\]
Shifting with $(0,u)$ and applying $\Delta$ we have a complex $(\FF\otimes R(0,u))_\Delta$ that, we claim, has no homology at all. 
Shifting and  applying  $\Delta$ are compatible operations  with taking homology. Therefore to prove that $(\FF\otimes R(0,u))_\Delta$ has no homology we have only to check that 
$$[(S/P^u)(-2i,-i+u)]_\Delta=0$$
for all $i$ and that  is obvious by degree reasons. 
So we have an exact complex of $R_\Delta$-modules: 
\[0\to  R^{\#}(0,-n)_\Delta\to \dots  \to R^{\#}(0,-1)_\Delta          \to  R^{\#}_\Delta \to          R(0,u)_\Delta \to 0.\]
Since we know (by Case 2) that $\reg_{R_\Delta} R(0,-i)_\Delta=i$ we may conclude (see \cite[Lemma 6.3]{CHTV}) that $\reg_{R_\Delta} R(0,u)_\Delta=0$ as desired.  \end{proof} 

\section{Diagonal algebras of cubics forms}
\label{se2}

Let $I$ be a homogeneous complete intersection ideal of codimension $r$   generated by elements of degree $d$ in a polynomial ring $R$ over a field $K$. Let $c,e\in \NN$ and consider the $K$-subalgebra $A_{c,e}$  of $R$ generated by the forms of degree $c+ed$ in $I^e$, i.e. $A_{c,e}=K[(I^e)_{ed+c}]$. 
If one gives the standard  $\ZZ^2$-graded structure to the Rees algebra $\Rees(I)$ of $I$, then the ring $A_{c,e}$ can  be seen as the $(c,e)$-diagonal subalgebra of $\Rees(I)$, that is, $A_{c,e}=\oplus_{i\in \NN} \Rees(I)_{(ic,ie)}$. 

Corollary  6.10 in \cite{CHTV} asserts that  $A_{c,e}$   is quadratic if $c\geq d/2$ and Koszul if $c\geq d(r-1)/r$. The authors of  \cite{CHTV} ask (at p. 900) whether  $A_{c,e}$ is Koszul also for $d/2\leq c <d(r-1)/r$. The first instance of this problem occurs  for $d=2$ and $r=3$, i.e. $3$ quadrics, where  the only possible value for $c$ is $1$. 
The goal of this section is to treat  this case that, as we will see in the following section, is also the crucial case for the classification problem discussed in the introduction.  
 
 Let $I=(g_1,g_2,g_3)$ be a complete intersection of  quadrics in $R=K[x_1,\dots,x_n]$. Consider the Rees algebra 
 $$\Rees(I)=R[g_1t,g_2t,g_3t]\subset R[t]$$ 
 of $I$  with its standard bigraded structure induced by $\deg x_i=(1,0)$ and $\deg g_it=(0,1)$. It can be realized as  a quotient of the polynomial ring 
 $$S=K[x_1,\dots,x_n,t_1,t_2,t_3]$$ bigraded  with $\deg x_i=(1,0)$ and $\deg t_i=(0,1)$, by the ideal $J$ generated by the $2$-minors of 
 $$ 
X=\left(
\begin{array}{ccc}
 g_1 &  g_2 &  g_3  \\
 t_1 &  t_2 &  t_3
\end{array}
\right)
.$$
Let $f_1,f_2,f_3$ be the $2$-minors of $X$ with the appropriate sign, say $f_i$ equals to  $(-1)^{i+1}$ times the minor of $X$ obtained by deleting the $i$-th column. 
Hence 
$$J=I_2(X)=(f_1,f_2,f_3).$$
 The sign convention is chosen so that the rows of the matrix $X$ are syzygies of $f_1,f_2,f_3$. 
 By construction we have  
 $$\Rees(I)_\Delta=K[I_3]=K[ x_ig_j : i=1,\dots,n, \ \ j=1,2,3]$$
  where $\Delta=(1,1)\ZZ\subset \ZZ^2$.   
Our goal is to prove that $K[I_3]$ is Koszul.

For later use we record the following: 

\begin{lemma}\label{easy}
\begin{itemize}
\item[(1)] $f_1,f_2$ form a regular $S$-sequence. 
\item[(2)] $(f_1,f_2):f_3=(g_3,t_3)$.   
\item[(3)] $(f_1,f_2):t_3=J$.   
\item[(4)] $(t_3,f_1,f_2):g_3=(t_1,t_2,t_3)$.
\end{itemize}
\end{lemma}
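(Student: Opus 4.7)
The plan is to establish the key ideal decomposition
$$(f_1, f_2) = J \cap (g_3, t_3)$$
and then deduce parts (1)--(3) as formal consequences using colon arithmetic; part (4) will be handled directly. Combined with the two fundamental syzygies
$$g_1 f_1 + g_2 f_2 + g_3 f_3 = 0 \qquad \text{and} \qquad t_1 f_1 + t_2 f_2 + t_3 f_3 = 0$$
coming from the two rows of $X$, this decomposition encodes essentially all the structure we need.

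For the decomposition, $\subseteq$ is immediate from $f_1 = g_2 t_3 - g_3 t_2$ and $f_2 = g_3 t_1 - g_1 t_3$. For $\supseteq$, I would take $x = a_1 f_1 + a_2 f_2 + a_3 f_3 \in (g_3, t_3)$; since $f_1, f_2 \in (g_3, t_3)$ already, this forces $a_3 f_3 \in (g_3, t_3)$. The core step is then to show that the image of $f_3$ in $S/(g_3, t_3) \cong (R/(g_3))[t_1, t_2]$, namely $g_1 t_2 - g_2 t_1$, is a nonzerodivisor. Expanding any putative annihilator in the monomial basis $\{t_1^i t_2^j\}$ and using only that $g_2$ is a nonzerodivisor modulo $g_3$ (which follows from the regularity of $g_1, g_2, g_3$), a short induction on bidegree kills every coefficient. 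So $a_3 \in (g_3, t_3)$, and writing $a_3 = g_3 b + t_3 c$ and invoking the two syzygies above yields $a_3 f_3 = -b(g_1 f_1 + g_2 f_2) - c(t_1 f_1 + t_2 f_2) \in (f_1, f_2)$, whence $x \in (f_1, f_2)$.

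Granted the decomposition, the rest is formal. For (1), $\operatorname{height}(f_1, f_2) = \min\{\operatorname{height} J,\ \operatorname{height}(g_3, t_3)\} = 2$, and since $S$ is Cohen--Macaulay, $f_1, f_2$ is a regular sequence. For (2), colons distribute over intersections, so $(f_1, f_2):f_3 = (J:f_3) \cap ((g_3, t_3):f_3) = S \cap (g_3, t_3) = (g_3, t_3)$, using $f_3 \in J$ and the nonzerodivisor property just proved. For (3), similarly $(f_1, f_2):t_3 = (J:t_3) \cap ((g_3, t_3):t_3) = J \cap S = J$, using that $J$ is prime (because $S/J = \Rees(I)$ embeds in the domain $R[t]$) and that $t_3 \notin J$ (it maps to $g_3 t \neq 0$). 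For (4), the inclusion $\supseteq$ follows from $g_3 t_3 \in (t_3)$, $g_3 t_1 = f_2 + g_1 t_3$, and $g_3 t_2 = g_2 t_3 - f_1$; conversely, $(t_3, f_1, f_2) \subseteq (t_1, t_2, t_3)$ since $f_1, f_2 \in (t_1, t_2, t_3)$, and the latter is prime with $g_3 \neq 0$ in the domain $S/(t_1, t_2, t_3) = R$, forcing any $a$ with $a g_3 \in (t_3, f_1, f_2)$ to lie in $(t_1, t_2, t_3)$.

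The main obstacle is precisely the nonzerodivisor statement underlying the decomposition; once that is in hand, the four assertions follow by elementary manipulations with syzygies and colon ideals.
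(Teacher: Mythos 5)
Your argument is correct, and the essential inputs coincide with the paper's: primeness of $J$, the two row syzygies of $X$, and the fact that $f_3$ is a nonzerodivisor modulo $(g_3,t_3)$. What differs is the packaging and two local arguments. You organize everything around the decomposition $(f_1,f_2)=J\cap (g_3,t_3)$, which the paper never states, and then obtain (1)--(3) by distributing colons over the intersection; the paper instead proves each colon statement directly, getting (2) from the containment $(f_1,f_2)\subseteq (g_3,t_3)$ together with the regularity of the sequence $g_3,t_3,f_3$ (which is exactly your nonzerodivisor claim), and (3) from $J$ being prime with $t_3\notin J$. The genuine divergences are: (i) your proof of the nonzerodivisor step, by expanding a putative annihilator of $g_1t_2-g_2t_1$ in $(R/(g_3))[t_1,t_2]$ and inducting on the $t_2$-exponent, is more computational than the paper's, which simply observes that $t_3$ occurs in neither $g_3$ nor $f_3$ and that $f_3$ is irreducible as a minimal generator of the prime $J$; and (ii) for (1) you count heights ($\operatorname{height}(J\cap(g_3,t_3))=2$ plus Cohen--Macaulayness of $S$), where the paper deduces coprimality of $f_1,f_2$ from primeness of $J$. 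Your route buys a cleaner, more conceptual statement (the link-theoretic decomposition makes (2) and (3) one-line consequences) at the price of the explicit annihilator computation and the implicit, though standard, verification that $\operatorname{height} J=2$; part (4) is identical in both treatments.
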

 
\begin{proof} 
(1):  the ideal $J$ is prime and hence $f_1,f_2$ have no common factors. 

(2): the inclusion $\supseteq$ follows because the rows of $X$ are syzygies of $f_1,f_2,f_3$.  Clearly  $(g_3,t_3)\supseteq  (f_1,f_2)$. Hence the equality follows if one shows that $g_3,t_3, f_3$ is a regular sequence. But that is obvious because the variable $t_3$ does not appear in the polynomials $g_3$ and $f_3$,  and $f_3$ is irreducible being a  minimal generator of a prime ideal. 
 
(3): the inclusion $\supseteq$ follows because the second row of $X$ is a syzygy of $f_1,f_2,f_3$. The equality follows because $J$ is prime and contains $(f_1,f_2)$. 
 
(4): for the inclusion $\supseteq$ simply note that $f_1=-g_3t_2 \mod(t_3)$ and $f_2=g_3t_1 \mod(t_3)$. The other inclusion follows because $(t_1,t_2,t_3)$ is a prime ideal  containing $(t_3,f_1,f_2)$. 
\end{proof}

Set  $B=S/(f_1,f_2)$. Since  $f_1,f_2$ is a  regular $S$-sequence of elements of bidegree $(2,1)$ we may apply to $B$ the results of Proposition \ref{messi}. 
One has also $B/f_3B=\Rees(I)$.  We will prove that:

\begin{theorem} 
\label{main1} 
We have: 
\begin{itemize} 
\item[(1)] $\reg_{B_\Delta} (\Rees(I)_\Delta)=1$, 
\item[(2)]  $\Rees(I)_\Delta$ is Koszul.  
\end{itemize}
\end{theorem}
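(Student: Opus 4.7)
The plan is first to deduce (2) from (1) via Lemma \ref{Cartan-Eilenberg}(3): since $\Rees(I)_\Delta$ is a graded quotient of the Koszul ring $B_\Delta$ (Proposition \ref{messi}(1)), once the bound $\reg_{B_\Delta}(\Rees(I)_\Delta)\le 1$ is established, Koszulness of $\Rees(I)_\Delta$ follows automatically. In (1), the lower bound $\reg_{B_\Delta}(\Rees(I)_\Delta)\ge 1$ is immediate: the defining ideal of $\Rees(I)_\Delta$ inside $B_\Delta$ is generated in bidegree $(2,2)$ by $t_1 f_3,\,t_2 f_3$ (note $t_3 f_3=0$ in $B$ by Lemma \ref{easy}(2)), so $\Tor^{B_\Delta}_1(\Rees(I)_\Delta,K)$ is non-zero in degree $2$. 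For the upper bound, I start from the short exact sequence of bigraded $B$-modules
\[
0\to B/(g_3,t_3)(-2,-1)\xrightarrow{\;\cdot f_3\;}B\to \Rees(I)\to 0
\]
coming from the identification of the annihilator of $f_3$ in $B$ with $(g_3,t_3)B$ (Lemma \ref{easy}(2)). Applying the exact functor $-_\Delta$, using $\reg_{B_\Delta}(B_\Delta)=0$, and running the long exact Tor sequence over $B_\Delta$ reduces the task to
\[
\reg_{B_\Delta}\bigl([B/(g_3,t_3)(-2,-1)]_\Delta\bigr)\le 2. \qquad (\ast)
\]

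To prove $(\ast)$ I plan to construct a complex $C_\bullet$ of bigraded free $B$-modules, each summand a shifted copy $B(-a,-b)$, augmented to $B/(g_3,t_3)(-2,-1)$, arranged so that (a) every bidegree shift $(a,b)$ appearing in $C_i$ satisfies $\max(a,b)\le i+2$, and (b) $H_i(C_\bullet)_\Delta=0$ for all $i\ge 1$. With such a $C_\bullet$, the diagonal complex $(C_\bullet)_\Delta$ is an exact resolution of $[B/(g_3,t_3)(-2,-1)]_\Delta$ by shifted-diagonal $B_\Delta$-modules, each term $B(-a,-b)_\Delta$ admitting a linear $B_\Delta$-resolution of regularity $\max(a,b)$ by Proposition \ref{messi}(2). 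Splicing these linear resolutions through iterated short exact sequences (together with Lemma \ref{Cartan-Eilenberg}(1)) converts the shift budget in (a) directly into the regularity bound $(\ast)$.

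The skeleton of $C_\bullet$ is the shift by $(-2,-1)$ of the Koszul complex of $(g_3,t_3)$ over $B$, namely $0\to B(-4,-2)\to B(-4,-1)\oplus B(-2,-2)\to B(-2,-1)\to 0$, whose $0$-th homology is already $B/(g_3,t_3)(-2,-1)$; a change-of-rings Tor computation via the $S$-free Koszul resolution of $B$ identifies the excess homologies as shifted copies of $S/(g_3,t_3)=K[x_1,\dots,x_n]/(g_3)\otimes K[t_1,t_2]$. To kill $H_1$ one adjoins the non-Koszul syzygies $(-t_2,g_2)$ and $(t_1,-g_1)$ of $(g_3,t_3)$ in $B$, which exist precisely because $f_1=g_2t_3-g_3t_2=0$ and $f_2=g_3t_1-g_1t_3=0$ in $B$; this produces a $2\times 3$ matrix closely related to $X$, and its further syzygies are controlled by the Eagon-Northcott presentation of $\Rees(I)=S/I_2(X)$ together with the periodicity $(f_1,f_2):f_3=(g_3,t_3)$ from Lemma \ref{easy}(2).

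The principal obstacle is the simultaneous enforcement of (a) and (b): at each stage the residual $B$-module homology is a shifted copy of $S/(g_3,t_3)$ whose $\Delta$-projection does not a priori vanish, so the construction must place these shifts into bidegrees whose $\Delta$-components do vanish in the relevant range, while keeping the accumulated shifts within the $i+2$ budget. This delicate coordination between the $X$-structure of the complex and the bigraded geometry of $S/(g_3,t_3)$ is the technical heart of the argument.
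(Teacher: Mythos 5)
Your reduction steps are sound and match the paper's strategy in spirit: deducing (2) from (1) via Lemma \ref{Cartan-Eilenberg}(3), the lower bound via the degree-$2$ generators $t_1f_3,t_2f_3$ of the defining ideal, and the short exact sequence $0\to B/(g_3,t_3)(-2,-1)\to B\to \Rees(I)\to 0$ coming from Lemma \ref{easy}(2) are all correct. But the proof stops exactly where the real work begins: the complex $C_\bullet$ satisfying your conditions (a) and (b) is never constructed, and you yourself flag the ``delicate coordination'' needed to make it exist as the unresolved ``technical heart of the argument.'' That is the theorem; a plan to find such a complex is not a proof of $(\ast)$.

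Worse, the skeleton you propose already violates your own shift budget. The Koszul complex on $g_3,t_3$ over $B$, shifted by $(-2,-1)$, has first term $B(-4,-1)\oplus B(-2,-2)$, and $B(-4,-1)$ sits in homological degree $1$ with $\max(4,1)-1=3>2$, so \cite[Lemma 6.3]{CHTV} combined with Proposition \ref{messi}(2) can only yield $\reg\le 3$ from it. The bidegree $(2,0)$ of $g_3$ makes it unusable as a differential in any complex meeting budget (a). The paper's resolution of this difficulty is to never let $g_3$ appear as a map: it uses the infinite $2$-periodic complex
$$\FF:\ \cdots\to B(-4,-3)\stackrel{f_3}{\longrightarrow}B(-2,-2)\stackrel{t_3}{\longrightarrow}B(-2,-1)\stackrel{f_3}{\longrightarrow}B\to 0,$$
whose shifts grow by exactly one per homological step on average, computes its homology via Lemma \ref{easy}(2)--(3) (zero in even positive degrees, and $(g_3,t_3)/(t_3,f_1,f_2)$ up to shift in odd degrees), and then invokes Lemma \ref{easy}(4) to identify the odd homology as $S/(t_1,t_2,t_3)(-i-3,-i)$, which vanishes on the diagonal for degree reasons. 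The element $g_3$ enters only as the generator of that homology, not as a differential. Until you produce an explicit complex with diagonal-acyclic higher homology and shifts within budget --- this periodic one or an equivalent --- the proof of $(\ast)$, and hence of (1), is missing.
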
 

Since, by construction,  $K[I_3]=\Rees(I)_\Delta$, as a corollary of Theorem \ref{main1} we have: 

\begin{corollary} 
\label{macao}
Let $I$ be a complete intersection of $3$ quadrics in $K[x_1,\dots, x_n]$. Then $K[I_3]$ is Koszul. 
\end{corollary}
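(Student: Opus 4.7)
The corollary is an immediate consequence of Theorem \ref{main1}(2). Indeed, with the bigrading $\deg x_i = (1,0)$ and $\deg g_j t = (0,1)$ on $\Rees(I)$, the component $\Rees(I)_{(k,k)}$ is spanned by the degree-$3k$ forms in $I^k$; in particular $\Rees(I)_\Delta$ is generated over $K$ by $\Rees(I)_{(1,1)} = R_1 \cdot I = I_3$, so $K[I_3] = \Rees(I)_\Delta$. The substantive content is therefore Theorem \ref{main1}(2), whose proof I sketch below.

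By Proposition \ref{messi}(1) the algebra $B_\Delta$ is Koszul, hence by Lemma \ref{Cartan-Eilenberg}(3) it suffices to prove $\reg_{B_\Delta}\Rees(I)_\Delta \leq 1$. Since $\Rees(I) = B/(f_3)$ and Lemma \ref{easy}(2) gives $(f_1,f_2):f_3 = (g_3,t_3)$, we obtain an exact sequence of bigraded $B$-modules
\begin{equation*}
0 \to (B/(g_3,t_3)B)(-2,-1) \to B \to \Rees(I) \to 0.
\end{equation*}
Because $f_1$ and $f_2$ both lie in $(g_3,t_3)$, one has $B/(g_3,t_3)B = S/(g_3,t_3)$. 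Applying the exact functor $-_\Delta$ reduces the desired bound to showing $\reg_{B_\Delta}\bigl((S/(g_3,t_3))(-2,-1)\bigr)_\Delta \leq 2$.

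To establish this last inequality, the plan is the one pioneered in Case 3 of the proof of Proposition \ref{messi}: construct a complex of bigraded $B$-modules, each term a direct sum of twists $B(-a,-b)$, such that its $\Delta$-part is a free $B_\Delta$-resolution of $(S/(g_3,t_3))(-2,-1)_\Delta$, and then invoke the regularity bound $\reg_{B_\Delta}B(-a,-b)_\Delta = \max(a,b)$ of Proposition \ref{messi}(2). The natural candidate blends the Koszul complex on the $S$-regular sequence $g_3, t_3$ with the linear syzygies $g_3 t_i = g_i t_3$ in $B$ for $i=1,2$, which arise from $f_1 = g_2 t_3 - g_3 t_2$ and $f_2 = g_3 t_1 - g_1 t_3$ and compensate for the failure of $g_3, t_3$ to be $B$-regular.

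The main obstacle is to control the higher homology of this complex over $B$ and to verify that it is annihilated by $-_\Delta$. Here the remaining colon identities of Lemma \ref{easy} are essential: $(f_1,f_2):t_3 = J$ from part (3) and $(t_3,f_1,f_2):g_3 = (t_1,t_2,t_3)$ from part (4) pin down $\Tor^S_\bullet(S/(g_3,t_3), B)$ as a bigraded $S$-module. The expectation, matching exactly the pattern of Case 3 of Proposition \ref{messi}, is that each higher homology sits in bidegrees in which the diagonal components $(k,k)$ vanish by degree reasons alone, so that the complex becomes $B_\Delta$-acyclic after $-_\Delta$. Combined with Proposition \ref{messi}(2), this delivers $\reg_{B_\Delta}\Rees(I)_\Delta \leq 1$ and hence the corollary.
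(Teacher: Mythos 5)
Your proof of the corollary proper is correct and coincides with the paper's: the identification $K[I_3]=\Rees(I)_\Delta$ together with Theorem \ref{main1}(2) is the entire argument, so as a proof of the stated corollary this is complete. Your appended sketch of Theorem \ref{main1} also tracks the paper's strategy (reduce via Lemma \ref{Cartan-Eilenberg} and the Koszulness of $B_\Delta$ to $\reg_{B_\Delta}\Rees(I)_\Delta\le 1$, then get that bound from a complex of twists of $B$ that becomes acyclic on the diagonal), and your short exact sequence $0\to (B/(g_3,t_3)B)(-2,-1)\to B\to\Rees(I)\to 0$ is exactly the first splice of the complex the paper uses. What you leave as an ``expectation'' is, however, the entire technical content. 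The paper realizes your plan with the two-periodic complex
\[
\FF:\ \cdots\to B(-4,-4)\stackrel{t_3}\longrightarrow B(-4,-3)\stackrel{f_3}\longrightarrow B(-2,-2)\stackrel{t_3}\longrightarrow B(-2,-1)\stackrel{f_3}\longrightarrow B\to 0,
\]
built from the relation $f_3t_3=0$ in $B$, and computes its homology over $B$ directly from Lemma \ref{easy}: it is $\Rees(I)$ in degree $0$, zero in positive even degrees by \ref{easy}(3), and $S/(t_1,t_2,t_3)(-i-3,-i)$ in odd degrees $i$ by \ref{easy}(2) and (4). Note this complex is \emph{not} exact over $B$ in odd degrees; it only becomes exact after applying $-_\Delta$, because $\bigl(S/(t_1,t_2,t_3)(-i-3,-i)\bigr)_\Delta=0$ for degree reasons (the second coordinate forces $k=i$, and then the first coordinate is $-3<0$). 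Only then does \cite[Lemma 6.3]{CHTV} combined with Proposition \ref{messi}(2) yield $\reg_{B_\Delta}\Rees(I)_\Delta\le\sup_i\{\reg_{B_\Delta}(F_i)_\Delta-i\}=1$; also, the diagonals $(F_i)_\Delta$ are not free over $B_\Delta$, merely linearly resolved, so the phrase ``free $B_\Delta$-resolution'' in your plan should be weakened accordingly. In short: right reduction, right blueprint for the theorem, but the homology identification that makes the blueprint a proof is missing.
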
 
 
\begin{proof}[ Proof of Theorem \ref{main1}] 
First of all we note that (2) follows from (1) and Lemma \ref{Cartan-Eilenberg} since,  by Proposition \ref{messi},  
we know that $B_\Delta$ is Koszul.   

It remains to prove (1). 
Since $f_3t_3=0$ in $B$ we have a complex
$$\FF:    \cdots  \to     B(-4,-4)\stackrel{t_3}\longrightarrow    B(-4,-3)\stackrel{f_3}\longrightarrow    B(-2,-2)\stackrel{t_3}\longrightarrow   B(-2,-1)\stackrel{f_3}\longrightarrow      B\to 0,$$
i.e. $F_i=B(-i,-i)$ if $i$ is even, $F_i=B(-i-1,-i)$ if $i$ is odd.  
The homology of $\FF$ can be described by using Lemma \ref{easy}: 

$$H_i(\FF)=\left\{ 
\begin{array}{lll}
\Rees(I)  & \mbox{ if }  & i=0\\
0             & \mbox{ if }  &i \mbox{ is even and positive} \\
S/(t_1,t_2,t_3)(-i-3,-i) & \mbox{ if } &i \mbox{ is odd and positive.}
\end{array} 
\right.
$$
The assertion for $i=0$ holds by construction, and for  $i$  even and positive it holds because of Lemma  \ref{easy}(3). 
Finally for $i$ odd and positive  by Lemma \ref{easy}(2) we have
$$H_i(\FF)=(t_3,g_3)/(t_3,f_1,f_2)(-i-1,-i).$$ 
Hence $H_i(\FF)$ is cyclic generated by the residue class of $g_3$ mod $(t_3,f_1,f_2)$ that has degree $(-i-3,-i)$. Using Lemma \ref{easy}(4) and keeping track of the degrees we get the desired result.     
Note that we have $H_i(\FF_\Delta)=H_i(\FF)_\Delta=0$  for every $i>0$ and $H_0(\FF_\Delta)=\Rees(I)_\Delta$. We may deduce from \cite[Lemma 6.3]{CHTV} that 
$$\reg_{B_\Delta}\Rees(I)_\Delta \leq \sup\{ \reg_{B_\Delta} (F_i)_\Delta -i\}.$$
Since $B$ is defined by a regular sequence of elements of bidegree $(2,1)$ we may apply Proposition \ref{messi} and get 
$$ \reg_{B_\Delta} (F_i)_\Delta=
\left\{
\begin{array}{ll}
i    & \mbox{ if $i$ is even} \\  
i+1& \mbox{ if $i$ is odd.} 
\end{array}
\right.
$$
Summing up,  we obtain $\reg_{B_\Delta} (\Rees(I)_\Delta)=1$. 
\end{proof}

\begin{remark} 
\label{remma}
(1) In \ref{main1}(2)  one cannot replace $\Rees(I)$ with a ring of the form $S/I_2(Y)$ where $Y=(y_{ij})$  is a $2\times 3$ matrix with $\deg y_{1j}=(2,0)$ and $\deg y_{2j}=(0,1)$ and $I_2(Y)$ has codimension $2$. 
For instance for  \[H=I_2
\left(\begin{array}{ccc}
 x_1^2 &  x_2^2 &  0 \\
 0          &  t_2 &  t_3
\end{array}
\right)
\]
and $S=K[x_1,x_2,x_3,t_1,t_2,t_3]$ one has that $(S/H)_\Delta$ is not Koszul  as can be checked by using Macaulay 2 \cite{MC2}. 
In other words, in the proof of \ref{main1} the fact that $J$ is a prime ideal plays a crucial role. 

(2) The coordinate ring of the pinched Veronese can be realized  as $S_\Delta/J_\Delta$,  where  
$$J=I_2
\left(\begin{array}{ccc}
 x_1^2 &  x_2^2 &  x_3^2 \\
 t_1     &  t_2      &  t_3
\end{array}
\right).
$$
Within the Segre product $S_\Delta=K[x_it_j : 1\leq i,j\leq 3]$ the ideal $J_\Delta$ has a Gr\"obner deformation to $H_\Delta=(x_1^2t_2t_i, x_1^2t_3t_i, x_2^2t_3t_i : i=1,2,3)$.  In particular, this shows that the pinched Veronese has a nice quadratic Gr\"obner deformation within the Segre ring $S_\Delta$.  But this is, unfortunately, not enough to prove that it is Koszul. 
\end{remark}

\begin{remark} 
With the notation introduced at the beginning of the section,  we have shown above that $A_{1,1}$ is Koszul for $r=3$ and $d=2$. 
The statements and the proofs of Proposition \ref{messi} and Theorem \ref{main1} generalize immediately to the case of diagonal $(1,e)\ZZ$ and one obtains that  $A_{1,e}$ is Koszul as well. Moreover the case $c=0$ is obvious because $A_{0,e}$ is the $e$-th Veronese ring of a polynomial ring in $3$ variables.  So we may conclude that,  for $r=3$ and $d=2$,  $A_{c,e}$ is Koszul for all $c$ and $e$.  \end{remark}

 \section{Projections to $\PP^8$ of the Veronese cubic embedding of $\PP^2$ } 
 \label{se3}
Let $c,n$ be positive integers and $V$ be a vector space of dimension $n+1$ over a field $K$ of characteristic $0$ or $>c$. The Veronese embedding of $\PP(V)$ with the forms of degree $c$ can be identified with the map 
 $$v_{n,c}: \PP(V)\to \PP(S_c(V))$$ sending $[x]$ to $[x^c]$.  Here $S_c(V)$ denotes the degree $c$ component of the symmetric algebra $S(V)$. The coordinate ring of $\PP(V)$ is  $S(V^*)$. Denote by $V_{n,c}$ the image of $v_{n,c}$. 
Denote by $(g,f)\to g\circ f$ the natural bilinear form  
$$S_c(V^*)\times S_c(V)\to K$$
that is $\GL(V)$-equivariant.  By taking a non-zero $F\in S_c(V)$ we may consider the projection $\phi_F$ from $\PP(S_c(V))$ to   $\PP(W)$ where $W=S_c(V)/\langle F\rangle$. The coordinate ring (of the closure of) the image of the composition $\phi_F \circ v_{n,d}$  gets identified with   the $K$-subalgebra of  $S(V^*)$ generated by  $U_F=\{ g\in S_c(V^*) : g\circ F=0\}$,  a space of forms of degree $c$ of codimension $1$.   After fixing a basis $y_0,\dots, y_n$ for $V$ and the dual basis $x_0,\dots, x_n$ for $V^*$, then $S(V)=K[y_0,\dots,y_n]$ and $S(V^*)=K[x_0,\dots,x_n]$. Furthermore  $g(x_0,\dots,x_n)\circ f(y_0,\dots,y_n)$ gets identified with the action of the differential operator  $g(\partial/\partial y_0,\dots,\partial/\partial y_n)$ on $ f(y_0,\dots,y_n)$. 

Returning now to the Veronese cubic embedding $V_{2,3}$ of $\PP^2$ in $\PP^9$,  we may summarize the discussion above as follows. The projection of $V_{2,3}$ to $\PP^8$ from a point $[F]\in \PP^9=\PP(S_3(V))$,  where $S_3(V)=K[y_0,y_1,y_2]_3$,  has coordinate ring $K[U_F]$ where  $U_F=\{ g\in K[x_0,x_1,x_2]_3  : g\circ F=0\}$. The construction is compatible with the  $\PGL_3(K)$ action, hence forms $F$ and $F_1$ that  are in the same $\PGL_3(K)$-orbit will give  projections with  isomorphic coordinate rings.   

Denote by $\sec_i(V_{2,3})$ the $i$-th secant variety of $V_{2,3}$. 
One knows that $\sec_1(V_{2,3})$ is a $5$-dimensional variety defined by the $3$-minors of a $3\times 6$ matrix of linear forms, a catalecticant matrix,  see  Kanev \cite{K} and Raicu \cite{Ra}. 
 It is also known    that  $\sec_2(V_{2,3})$  is a quartic hypersurface, defined by the Aronhold invariant,  that coincides with the Zariski closure of the $\PGL_3(K)$-orbit of the Fermat cubic, see Dolgachev and Kanev \cite{DK} or Ottaviani \cite{O}. 
Furthermore $\sec_1(V_{2,3})\setminus V_{2,3}$ consists of two $\PGL_3(K)$-orbits, the orbit of the following polynomials: 
 $$F_1=y_1y_2^2, \ \    F_2=y_1^3+y_2^3$$ 
 while 
 $\sec_2(V_{2,3})\setminus \sec_1(V_{2,3})$ consists  of  three  orbits, the orbits of:   
 $$F_3=y_1y_0^2+y_2y_1^2, \ \ F_4=y_2^2y_1+y_0^3, \ \ F_5=y_0^3+y_1^3+y_2^3,$$
see \cite[5.13.2]{DK}.  
We have: 

\begin{theorem} 
\label{superT}
Let $F\in K[y_0,y_1,y_2]_3$ and let $A_F$ be the coordinate ring of the projection of $V_{2,3}$ to $\PP^8$ from the point $[F]$.  We have: 
\begin{itemize} 
\item[(1)]  If $[F]\in V_{2,3}$ then $A_F$ is Koszul. More precisely, it is G-quadratic.  
\item[(2)]  If $[F]\in \sec_2(V_{2,3})\setminus V_{2,3}$ then $A_F$ is not quadratic (and hence not  Koszul).  
\item[(3)]  If $[F]\not \in \sec_2(V_{2,3})$ then $A_F$ is  Koszul. 
\end{itemize}
\end{theorem}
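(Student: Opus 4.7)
The plan is to reformulate everything through apolarity. Under the $\PGL_3(K)$-equivariant apolarity pairing $K[x_0,x_1,x_2]_3\times K[y_0,y_1,y_2]_3\to K$, the subspace $U_F$ coincides with the degree-$3$ component $(F^\perp)_3$ of the apolar (Macaulay inverse system) ideal $F^\perp\subset K[x_0,x_1,x_2]$; hence $A_F=K[(F^\perp)_3]$, and this depends only on the $\PGL_3$-orbit of $F$. I will therefore work throughout with the orbit representatives from the stratification by secant varieties recalled before the statement.

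For part $(3)$, the key classical input (see Dolgachev--Kanev \cite{DK}, Ottaviani \cite{O}) is that $[F]\notin\sec_2(V_{2,3})$ is equivalent to the three quadrics spanning $(F^\perp)_2$ forming a regular sequence in $K[x_0,x_1,x_2]$; in that case $F^\perp=(q_1,q_2,q_3)$ is a complete intersection of three quadrics. Setting $I=(q_1,q_2,q_3)$ one has $A_F=K[I_3]$, and Corollary \ref{macao} immediately gives that $A_F$ is Koszul.

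For part $(1)$, by the $\PGL_3$-action I may assume $F=y_0^3$. A direct apolarity computation then yields $(F^\perp)_3=J_3$ with $J=(x_1,x_2)$, so $A_F=K[J_3]$ is a toric algebra generated by the nine cubic monomials of $K[x_0,x_1,x_2]$ other than $x_0^3$. To obtain G-quadraticity I would realize $A_F$ as the $(3,1)$-diagonal subalgebra of $\Rees(J)=K[x_0,x_1,x_2,t_1,t_2]/(x_1t_2-x_2t_1)$ and adapt the methods of Section \ref{se1} to this simpler setting of a single bihomogeneous relation of bidegree $(1,1)$. This should yield Koszulness; for the sharper G-quadratic statement one complements it with an explicit sorting argument on the nine monomial generators to exhibit a quadratic Gr\"obner basis for the toric presentation ideal.

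For part $(2)$, I will use the Dolgachev--Kanev classification to reduce to the five explicit orbit representatives $F_1,\dots,F_5$. For each $F_i$ I would compute $F_i^\perp$ and the defining ideal of $A_{F_i}$ explicitly, and exhibit a minimal cubic generator of that ideal which does not lie in the ideal generated by the quadratic relations; this certifies that $A_{F_i}$ is not quadratic. The hard part is precisely this step: the five orbits have genuinely different apolar structure (for $F_1,F_2\in\sec_1(V_{2,3})\setminus V_{2,3}$ the ideal $F^\perp$ contains a linear form, whereas for $F_3,F_4,F_5\in\sec_2(V_{2,3})\setminus\sec_1(V_{2,3})$ it does not, but the apolar quadrics have a nontrivial common zero), so the cubic obstruction takes a different concrete form in each case and the verification is unavoidably case-by-case.
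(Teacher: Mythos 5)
Your proposal follows essentially the same route as the paper: the heart of the theorem, part (3), is handled identically by observing that for $[F]\notin\sec_2(V_{2,3})$ the apolar ideal is a complete intersection of three quadrics and invoking Corollary \ref{macao} (the paper cites Conca--Rossi--Valla \cite[Cor.~6.12]{CRV} for the complete-intersection fact, which is the correct reference for the direction you need; Dolgachev--Kanev and Ottaviani are used only for the orbit stratification). Two smaller remarks. For (1), the paper gets G-quadraticity in one line from De Negri's result \cite[2.13]{D}, since $U_F$ is a lexsegment; your sorting argument does in fact work here (the excluded monomial $x_0^3$ can never arise by sorting a product of two other degree-$3$ monomials, so the generating set is closed under sorting), but the detour through $\Rees(x_1,x_2)$ is unnecessary. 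For (2), your plan coincides with the paper's, but you stop short of the actual verification, which is the entire content of that part: the paper carries it out by explicit elimination in CoCoA/Macaulay~2 for all five orbits (finding exactly one cubic among the minimal generators in each case), supplemented by the theoretical argument of Alzati--Russo for the Fermat cubic $F_5$ and, for $F_1,F_2$, by the observation that these are cones, so the question reduces to projections of the twisted cubic $V_{1,3}$, which are the cuspidal and nodal plane cubics. Until those cubic generators are actually exhibited (by machine or by such reductions), part (2) of your argument is a program rather than a proof.
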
 
\begin{proof} For $(1)$ we may assume that  $F=y_2^3$ and then $A_F$ is the toric ring generated by all the monomials of degree $3$ in $K[x_0,x_1,x_2]$ different from $x_2^3$. That such a ring is defined by a Gr\"obner basis of quadrics can be proved by direct computations but follows also from the result  \cite[2.13]{D} of De Negri  because the vectors space $U_F$ is a lexicographic segment.  

For $(2)$  we have to check that for each $F_i$ (with $i=1,\dots,5$)   described above the corresponding ring $A_{F_i}$ is not quadratic.   One can compute explicitly  a presentation of  $A_{F_i}$ by elimination using, for instance,   CoCoA \cite{CoCoA} or Macaulay 2 \cite{MC2}.  And then one checks that  there is a cubic form the among generators of the defining ideal. Indeed there is exactly one cubic generator in each case, while the number of quadrics is either  $17$ (for $i=3,5$)  or $18$ (for $i=1,2,4$). That there is a cubic generator in the case of the Fermat cubic $F_5$ has been verified also by Alzati and Russo in \cite[Example 4.5]{AR} by a theoretical argument. Also the case  $F_1$ and $F_2$ can be treated easily because they are ``cones".  For them  it is enough to prove that projections of  the rational normal curve  $V_{1,3}\in \PP^3$ from the corresponding points  are cubic hypersurfaces in $\PP^2$, and this is well-known.  One obtains the cuspidal cubic with $F_1$ and the nodal cubic with $F_2$. 
Finally for $(3)$ one notes that if $[F]\not \in \sec_2(V_{2,3})$ then the ideal $I$ of all the forms $g\in K[x_0,x_1,x_2]$ such that $g\circ f=0$ is a  complete intersection of three quadrics, see Conca, Rossi and Valla  \cite[Cor.6.12]{CRV}. Since $A_F=K[I_3]$, we may apply Corollary \ref{macao} and conclude that $A_F$ is Koszul. 
\end{proof} 

\begin{remark}
In view of Theorem \ref{superT}(3) one can ask whether for  $F\not\in \sec_2(V_{2,3})$  the ring $A_F$ is G-quadratic. It is known that the pinched Veronese does not have a Gr\"obner basis of quadrics in the toric presentation   but we do not know how to exclude a Gr\"obner basis of quadrics  in every other possible coordinate system.  Some of the well-known necessary conditions that a monomial ideal $U  \subset K[z]=K[z_1,\dots,z_9]$ must satisfy to be the initial ideal of the defining ideal of $A_F$ include:
\begin{itemize}
\item[(1)]  $K[z]/U$ must have the same Hilbert series as $A_F$ and the graded Betti numbers at least as big as those of $A_F$, 
\item[(2)]  the radical of $U$ must be pure and connected in codimension $1$,  
\end{itemize} 
see Kalkbrener and Sturmfels \cite{KS} or Varbaro \cite{V}. But there are plenty of quadratic monomial ideals satisfying those conditions 
and we do not know how to exclude that they are  the initial ideal of the defining ideal of the pinched Veronese, not to mention other $A_F$.

\end{remark}

\begin{remark}
For $F\not \in \sec_2(V_{2,3})$ another quite natural question is whether the ring $A_F$ can be deformed,  via a Sagbi deformation,  to the pinched Veronese. As it follows from \cite[Cor.6.12]{CRV}, the answer is positive if and only if   $F$ is singular. So the ``general"  $A_F$ does not have a Sagbi deformation to the pinched Veronese. 
\end{remark}

\begin{remark}
An interesting question suggested by the proof of Theorem \ref{superT} is the following. Suppose $F$ is a form of degree $d$  in $K[y_0,\dots,y_n]$ that  does not involve the variable $y_n$. We may consider the coordinate ring $A$ of the projection of $V_{n,d}$ from $F$ and also the coordinate ring $B$  of the projection  of    $V_{n-1,d}$ from $F$. Is it true that   $A$ is Koszul or quadratic if $B$ is so? That the opposite implication holds true follow easily from the fact that $B$ is an algebra retract of $A$. 
\end{remark}


\begin{thebibliography}{99}

\bibitem{AR}  
A.~Alzati,  F.~Russo,
\emph{On the k-normality of projected algebraic varieties}, 
Bulletin Brazilian Mathematical Society {\bf 33} (2002), 27-48.

\bibitem{A}  
L.~L.~Avramov, 
\emph{Infinite free resolutions},  
Six lectures on commutative algebra 
(Bellaterra, 1996), 
Progr. Math. \textbf{166}, Birkh\"auser, Basel, 1998; 1--118.


\bibitem{AE} 
L.~L.~Avramov and D.~Eisenbud, 
\emph{Regularity of modules over a Koszul algebra.} 
J. Algebra {\bf 153}  (1992), 85--90.


\bibitem{AP}  
L.~L.~Avramov, I.~Peeva, 
\emph{Finite regularity and Koszul algebras}, 
American Journal of Mathematics  {\bf  123} (2001),  275--281.

\bibitem{C}  
G.~Caviglia,
\emph{The pinched Veronese is Koszul}, 
J. Algebr. Comb.  {\bf 30} (2009), 539Ð548.  

\bibitem{CoCoA}
CoCoATeam, 
{\em CoCoA: a system for doing Computations in Commutative Algebra,} 
Available at \texttt{http://cocoa.dima.unige.it.}

\bibitem{CHTV}
A.~Conca, J.~Herzog, N.~V.~Trung, G.~Valla, 
\emph{Diagonal subalgebras of bigraded algebras and embeddings of blow-ups of
projective spaces,} 
Amer. J. Math. {\bf 119} (1997), 859--901.

\bibitem{CRV}
A.~Conca, M.~E.~Rossi, G.~Valla, 
\emph{  Gr\"obner flags and Gorenstein algebras,}
 Compositio Math. {\bf 129}  (2001), 95--121.

\bibitem{D}
E.~De Negri, 
\emph{Toric rings generated by special stable sets of monomials}, 
Mathematische Nachrichten, {\bf 203} (1999), 31--45.

\bibitem{DK}  
I.~Dolgachev, V.~Kanev, 
\emph{Polar covariants of plane cubics and quartics}, 
Adv.~Math.~{\bf 98} (1993), 216--301.


\bibitem{ERT}
D.~Eisenbud, A.~Reeves and B.~Totaro, 
\emph{Initial ideals, Veronese
subrings, and rates of algebras.} 
Adv. in Math. {\bf 109}(1994), 168--187.

\bibitem{MC2}
D.~R.~Grayson and M.~E.~Stillman, 
{\em Macaulay 2, a software system
for research in algebraic geometry,} 
Available at \texttt{http://www.math.uiuc.edu/Macaulay2/.}

\bibitem{H}  
M.~Hochster, 
\emph{Some applications of the Frobenius in characteristic $0$,} 
Bulletin of the American Mathematical Society {\bf 84} (1978), 886-912. 

\bibitem{KS}   
M.~Kalkbrener, B.~Sturmfels, 
\emph{ Initial complex of prime ideals,}
 Adv. Math. {\bf 116} (1995), 365--376.

\bibitem{K}   
V.~Kanev, 
\emph{Chordal varieties of Veronese varieties and catalecticant matrices,} 
J. Math. Sci. (New York) {\bf 94} (1999), 1114--1125. 

\bibitem{O}
G.~Ottaviani, 
 \emph{ An invariant regarding Waring's problem for cubic polynomials,}  
 Nagoya Math. J. {\bf 193} (2009), 95--110.
 
 
 \bibitem{PP} A.~Polishchuk, L.~Positselski,  
 \emph{Quadratic algebras}, Univ. Lecture Ser. {\bf 37}, Amer.~Math.~Soc., Providence, RI, 2005.

\bibitem{P}  
S.~B.~Priddy, 
\emph{Koszul resolutions,} 
Trans. Amer. Math. Soc. {\bf 152} (1970), 39--60.



\bibitem{Ra}  
C.~Raicu,  
\emph{ $3\times 3$ Minors of Catalecticants,}
arXiv:1011.1564      

\bibitem{V}
M.~Varbaro, 
\emph{ Gr\"obner deformations, connectedness and cohomological dimension} 
 J.~Algebra {\bf 322} (2009), no. 7, 2492-2507. 



 \end{thebibliography}
\end{document}